\def\hpq0{h^{p,q}_{\leq 0}}
\def\Hpq0{\H_{\leq 0}^{p,q}}
\def\dbar{\bar\partial}
\def\ddbar{\partial\dbar}
\def\R{{\mathbb R}}
\def\C{{\mathbb C}}
\def\H{{\mathcal H}}
\def\be{\begin{equation}}
\def\ee{\end{equation}}
\newtheorem{thm}{Theorem}[section]
\newtheorem{lma}[thm]{Lemma}
\newtheorem{prop}[thm]{Proposition}
\theoremstyle{definition}
\theoremstyle{remark}
\newtheorem{preremark}{Remark}
\newtheorem{preex}{Example}
\numberwithin{equation}{section}
\begin{document}

\begin{abstract} We give an estimate for the volume of an analytic variety (or more generally the mass of a positive closed current) close to a real submanifold $M$. Applications are given to the Hausdorff measure of the intersection of the variety with $M$ and the exponential integrability of plurisubharmonic functions on $M$. 
\end{abstract}

\title[]
{ Plurisubharmonic functions and real submanifolds of $\C^n$.}

\author[]{ Bo Berndtsson}

\bigskip

\maketitle

\quad \quad \quad \quad \quad \quad {\it To Jaap Korevaar, on his 100:th birthday}

\section{Introduction}

In the middle seventies, during my graduate studies, I spent a memorable semester in Amsterdam, with Jaap Korevaar as supervisor. I was working on a problem concerning a multivariate version of the M\"untz approximation theorem. Via the complex analytic approach to M\"untz' theorem, the problem amounts to give a sufficient condition for when a discrete subset of $\R^n$ is a set of uniqueness for the space of bounded holomorphic functions in a product of (right) half planes. I never managed to make much progress on this question. The best extension of the original work of Korevaar and Hellerstein, \cite{Korevaar-Hellerstein}, is still the paper of Ronkin, \cite{Ronkin}, and the optimal condition that we were looking for is still not known. 

There is only one part of my work in that context that I still think has some interest. That is a theorem saying that if a holomorphic  function has a lot of zeros on $\R^n$, then the area of its zero set in a neighbourhood of $\R^n$ also has to be big (see Theorem 2.1). The purpose of this note -- apart from contributing to the birthday volume in honor of Jaap -- is to give an extension of that result to more general submanifolds of $\C^n$ than $\R^n$. In the statement of the next theorem, which is the main result of this note, we use the notation $J$ for the complex structure on $\C^n$ and $\beta$ for the standard K\"ahler form on $\C^n$, $i\ddbar |z|^2$. 

\begin{thm} Let $B$ be the unit ball of $\C^n$ and let $M\subset B$ be a properly embedded smooth submanifold of codimension $m$. Assume that $M$ is generating in the sense that the complex span of its tangent space at every point is equal to all of $\C^n$
$$
T_p(M)+JT_p(M) =\C^n,
$$
for all $p\in M$. Let, for $t\leq t_0$ and $t_0$ fixed, sufficiently small, $U_t$ be the neighbourhood of $M$ consisting of points with distance less than $t$ from $M$. Let $\theta$ be closed positive current in $B$ and put for $r<1$
$$
\sigma(t)=\int_{U_t\cap rB} \theta\wedge \beta^{n-1}/(n-1)!
$$
Then 
$$
\sigma(t)\leq C_{r,M} |\theta| t^{m-1},
$$
where
$$
|\theta|=\int_{U_t\cap B} \theta\wedge \beta^{n-1}/(n-1)!.
$$
\end{thm}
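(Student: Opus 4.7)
My plan is to localize the estimate to charts where $M$ looks like the flat model $\R^m\times\C^{n-m}$, run an integration-by-parts against a smooth tubular cutoff, and extract the factor $t^{m-1}$ from a mean-zero cancellation that hinges on the generating hypothesis. The generating condition is equivalent to saying that near any $p\in M$ one can pick real defining functions $\rho_1,\dots,\rho_m$ whose $(1,0)$-parts $\ds\rho_1,\dots,\ds\rho_m$ are $\C$-linearly independent; then $\psi:=\sum_j\rho_j^2$ satisfies $\psi\asymp d(\cdot,M)^2$ and $i\ddbar\psi = 2\sum_j i\ds\rho_j\wedge\dbar\rho_j + O(\psi^{1/2})$ is a positive $(1,1)$-form of rank exactly $m$ along $M$, with trace against $\beta^{n-1}/(n-1)!$ comparable to $m$. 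A finite partition of unity on $M\cap\overline{rB}$ reduces us to a chart where, after a holomorphic coordinate change, $M=\{y_1=\dots=y_m=0\}$ and $\psi\asymp|y'|^2$ with $y'=(y_1,\dots,y_m)$. Fix a smooth $\chi:\R\to[0,1]$ with $\chi=1$ on $[0,1]$, $\supp\chi\subset[0,4]$, set $\chi_t(z):=\chi(\psi(z)/t^2)$, and pick $\eta\in C^\infty_c(B)$ with $\eta\equiv 1$ on $rB$.

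Next I would write the closed positive $(1,1)$-current locally as $\theta=i\ddbar\phi$ for a psh potential $\phi$ and use closedness plus two applications of Stokes to get
$$\sigma(t)\leq \int\eta\chi_t\,\theta\wedge\beta^{n-1}/(n-1)! = \int\phi\,i\ddbar(\eta\chi_t)\wedge\beta^{n-1}/(n-1)!.$$
The principal contribution comes from $\eta\,i\ddbar\chi_t$. The chain rule together with the flat-model identities $i\ddbar\psi=(1/2)\sum_{j\leq m}i\,dz_j\wedge d\bar z_j$ and $i\ds\psi\wedge\dbar\psi\wedge\beta^{n-1}/(n-1)!=|y'|^2\,dV$ give the trace identity
$$i\ddbar\chi_t\wedge\beta^{n-1}/(n-1)! = \frac{1}{t^2}\,G\!\left(\frac{|y'|^2}{t^2}\right)dV,\qquad G(s):=\frac{m}{2}\chi'(s)+s\chi''(s).$$
The key algebraic identity $s^{m/2-1}G(s)=(s^{m/2}\chi'(s))'$ together with compact support of $\chi$ give $\int_{\R^m}G(|u|^2)\,du=0$. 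Substituting $y'=tu$ and exploiting the mean-zero property:
$$\int\eta\phi\,i\ddbar\chi_t\wedge\beta^{n-1}/(n-1)! = t^{m-2}\!\int\eta\bigl(\phi(x',tu,z'')-\phi(x',0,z'')\bigr)G(|u|^2)\,du\,dx'\,dz''.$$
For smooth $\phi$ the $L^1$-norm of the $y'$-increment scales like $t\,\|\partial_{y'}\phi\|_{L^1}$, yielding the desired gain $t^{m-1}$. The error terms in $i\ddbar(\eta\chi_t)$ arising from $d\eta$ and $\chi_t\,i\ddbar\eta$ have uniformly bounded coefficients supported on a set of volume $O(t^m)$ and so contribute at most $O(t^m)$.

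The main obstacle is making this cancellation rigorous when $\phi$ is merely psh, hence only $L^1_{\loc}$ and possibly $-\infty$ on a positive-measure subset of $M$. I would handle it by regularization $\phi_\varepsilon:=\phi*\rho_\varepsilon$, proving the bound uniformly in $\varepsilon$ via the Poincar\'e-type inequality
$$\int|\phi_\varepsilon(x',tu,z'')-\phi_\varepsilon(x',0,z'')|\,du\,dx'\,dz''\leq Ct\,\|\nabla\phi_\varepsilon\|_{L^1(\text{tube}')},$$
together with the standard interior bound $\|\nabla\phi_\varepsilon\|_{L^1(K)}\leq C_K\|\phi_\varepsilon\|_{L^1(K')}$ for smooth psh functions on $K\Subset K'$, and finally controlling $\|\phi_\varepsilon\|_{L^1}$ by the Riesz mass $|\theta|$ via the submean inequality. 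Passing to the limit $\varepsilon\to0$ using weak convergence $i\ddbar\phi_\varepsilon\rightharpoonup\theta$ and summing over the finite cover yields $\sigma(t)\leq C_{r,M}|\theta|t^{m-1}$.
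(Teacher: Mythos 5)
The decisive step in your argument --- ``after a holomorphic coordinate change, $M=\{y_1=\dots=y_m=0\}$'' --- is not available, and the whole cancellation mechanism is built on it. A generating submanifold of codimension $m<n$ carries a CR structure whose Levi form is a local biholomorphic invariant; the flat model $\R^m\times\C^{n-m}$ is Levi-flat, so for instance the unit sphere in $\C^2$ (codimension $1$, generating) is not locally biholomorphic to $\{y_1=0\}$, not even to second order at a point. Even in the totally real case $m=n$ the flattening would force $M$ to be real-analytic (a biholomorphism is real-analytic, so any $M$ it maps into $\R^n$ is real-analytic), while the theorem assumes only smoothness. Your key identities --- $i\ddbar\psi=\tfrac12\sum_{j\le m}i\,dz_j\wedge d\bar z_j$ with constant coefficients, the exact formula for $G$, and hence $\int_{\R^m}G(|u|^2)\,du=0$ --- all live in the flat model. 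For a genuinely curved $M$ one must work with $\psi=\sum_j\rho_j^2$, and then $i\ddbar\chi_t$ acquires terms such as $\chi'(\psi/t^2)\,\rho_j\,i\ddbar\rho_j/t^2$ and variable-coefficient versions of the quadratic terms; these are of size $1/t$ on a tube of volume comparable to $t^m$, so their naive contribution is of the very same order $t^{m-1}$ (times norms of $\phi$ not controlled by $|\theta|$) as the bound you are trying to prove. Establishing the required approximate cancellation in the curved setting is precisely the hard point, and it is not addressed. (The paper sidesteps all of this: it never flattens and never introduces a potential for $\theta$; it builds a comparison function from $w=\tfrac12\sum\rho_j^2$, corrected to $v=w^{1/2}+Aw$, shows $(i\ddbar w)^{m-1}\wedge\beta^{n-m}\geq\delta'\beta^{n-1}$ near $M$ using only the linear independence of the $\partial\rho_j$, and extracts the factor $t^{m-1}$ from the Stokes-type monotonicity of $t^{1-m}\int_{\{v<t\}}\theta\wedge(i\ddbar u)^{m-1}\wedge\beta^{n-m}$ of Proposition 2.2.)

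A second, smaller but still genuine, defect is the last link of your chain: $\|\phi_\varepsilon\|_{L^1}$ cannot be ``controlled by the Riesz mass $|\theta|$ via the submean inequality''. A local potential of $\theta$ is determined only up to pluriharmonic additions, so its $L^1$ norm bears no relation to the mass of $i\ddbar\phi$ (take $\phi$ a large constant). This part is repairable: choose $\phi$ to be the Newtonian potential of the trace measure of $\theta$ on a slightly larger chart, observe that the harmonic remainder $h$ drops out of your pairing because $\int h\,i\ddbar(\eta\chi_t)\wedge\beta^{n-1}=c\int\eta\chi_t\,\Delta h\,dV=0$, and use the standard estimate $\|\nabla(N*\mu)\|_{L^1(K)}\leq C_K\,\mu(2B)$. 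But even with this fix the proof does not close, because of the flattening gap above.
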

My `Amsterdam theorem', alluded to above is a more precise version of this, saying roughly that $\sigma(t)/t^{m-1}$ is increasing, in case $M=\R^n\cap B$. There is a version of that in the general case as well, saying that $\sigma(t)/t^{m-1}$ is almost increasing, in the sense that for some constant (independent of $\theta, t, s$)
$$
\sigma(t)/t^{m-1}\leq C\sigma(s)/s^{m-1}
$$
if $t<s$. This is the theorem we will prove, and Theorem 1.1 is of course a consequence. Applied to the current of integration on a zero set (or `divisor'), $V$,  this also implies that if a holomorphic function has a large discrete set of zeros on $M$, then the volume of $V$ in neigbourhood of $M$ must also be large. In a similar way one can estimate the Hausdorff measure of $V\cap M$ from above.

We give the proof of the theorem in the next section, and there we also show why the condition that $M$ be generating is necessary when $M$ is a linear subspace.
In section 3 we 
then give some applications to the volume of zero sets . In section 4 we will give an application in a somewhat different direction, to the integrability of plurisubharmonic functions on the submanifold $M$, and give an alternative proof of a result of Duc-Viet Vu, \cite{Duc-Viet Vu}.

\medskip

{\bf Remark:}  As kindly pointed out to me by Duc-Viet Vu, Theorem 1.1  is essentially also contained in his paper \cite{Duc-Viet Vu}, although not explicitly stated there. More precisely, when the codimension $m$ equals $n$ (so that $M$ is totally real of maximal dimension)  the estimate in Theorem 1.1 is contained in the last line of the proof of his Proposition 4.5, and similar methods give the case of lower codimension too. Therefore the only novelty of this paper lies in the applications in section 3, and in the proof in section 4, which is different from the one in \cite{Duc-Viet Vu}.

\section{monotinicity properties of the mass of a positive current, and the proof of Theorem 1.1}

Let us first state the theorem from \cite{Berndtsson} mentioned in the introduction.
\begin{thm} Let $K$ be a compact convex subset of $\R^n$ and let $\theta$ be a positive closed current of bidimension $(p,p)$, defined in a neighbourhood of $K$ in $\C^n$. Let $d(z)$ be the distance from a point $z$ in $\C^n$ to $K$ considered as a subset of $\C^n$ and put
$$
\sigma(t):=\int_{ d<t}\theta\wedge \beta^{p}/p!
$$ 
(the mass of $\theta$  in the set $\{d<t\}$). Then
$$
\sigma(t)/t^{p}
$$
is increasing.
\end{thm}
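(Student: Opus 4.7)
The plan is to realize $\phi(z) := d(z,K)^2$ as a plurisubharmonic function on $\C^n$ and apply a Lelong--Jensen-type integration-by-parts argument to the closed positive current $\theta$. Writing $z = x+iy$ with $x,y\in\R^n$, the convexity of $K\subset\R^n$ implies that the nearest point in $K$ to $z$ equals the nearest point in $K$ to $x$, so $\phi(z) = d_{\R^n}(x,K)^2 + |y|^2$. The first summand is convex in $x$, hence psh on $\C^n$ (any real-convex function on $\R^{2n}\cong\C^n$ is psh), and direct computation gives $i\ddbar|y|^2 = \tfrac{1}{2}\beta$. Thus $\phi$ is psh with $i\ddbar\phi \geq \tfrac{1}{2}\beta$ in the sense of positive $(1,1)$-currents.

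After regularizing $\phi$ to a smooth psh function (for instance by replacing $K$ by $K+\varepsilon B$), I would apply Stokes's theorem on the sublevel set $U_t := \{\phi < t^2\} = \{d<t\}$. Since $\phi = t^2$ on $\partial U_t$, one has $d^c\phi = 2t\,d^cd$ there. Iterated integration by parts, using that $\theta$ is closed and that Bedford--Taylor wedge products of psh functions are closed positive currents, then yields the clean identity
$$H(t) := \int_{U_t}\theta\wedge(i\ddbar\phi)^p = (2t)^p \int_{U_t}\theta\wedge(i\ddbar d)^p =: (2t)^p J(t).$$
Since $J(t)$ is manifestly increasing in $t$ (a positive measure integrated over a growing set), $H(t)/t^p = 2^p J(t)$ is increasing in $t$.

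To pass from the monotonicity of $H(t)/t^p$ to that of $\sigma(t)/t^p$, write $i\ddbar\phi = \beta/2 + \tau$ with $\tau := i\ddbar\phi - \beta/2 \geq 0$. Binomial expansion of $(i\ddbar\phi)^p$ separates $H(t)$ into a ``main term'' equal to $p!\,\sigma(t)/2^p$ plus positive correction terms involving wedge powers of $\tau$ and $\beta/2$. A parallel iterated-Stokes analysis of each correction, exploiting the special structure $\tau = i\ddbar(d_{\R^n}(x,K)^2)$ arising from a convex function of $x$ alone, should yield a matching $t^p$-monotonicity for each summand, so that $\sigma(t)/t^p$ inherits the monotonicity.

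The main obstacle is this last comparison step: separating the $\sigma(t)$ contribution from the binomial corrections while preserving $t^p$-monotonicity. An alternative is to rerun the iterated integration by parts starting with $\beta = 2\,dd^c|y|^2$ in place of $i\ddbar\phi$; because $|y|^2$ is not constant on $\{d = t\}$, the boundary terms are no longer a scalar multiple of $d^c d$, and one must exploit the relation $|y|^2 = t^2 - d_{\R^n}(x,K)^2 \leq t^2$ on $\partial U_t$ together with the positivity of $\theta$. Additional standard care is needed for the $C^{1,1}$-regularity of $\phi$ (handled by the regularization $K\to K+\varepsilon B$ and passing to the limit) and for distributional subtleties at non-smooth boundary points when $K$ has corners.
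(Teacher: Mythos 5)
Your first two steps are correct, but they reprove something the paper already has rather than the statement at hand. The identity $H(t)=(2t)^pJ(t)$ is precisely the paper's Proposition 2.2 specialized to $v=d$, $u=d^2/2$ (two applications of Stokes, the cross terms dying because $\dbar d\wedge\dbar d=0$), and your technical caveats are indeed minor (note, though, that $K\mapsto K+\varepsilon B$ only gives a $C^{1,1}$ squared distance, not a smooth one; Bedford--Taylor products of the convex functions $d$, $d^2$ are the right framework). The problem is that what this makes increasing is $t^{-p}\int_{\{d<t\}}\theta\wedge(i\ddbar d^2)^p$, a Monge--Amp\`ere type mass, whereas Theorem 2.1 concerns the trace mass $\sigma(t)=\int_{\{d<t\}}\theta\wedge\beta^p/p!$. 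Converting the first into the second is the entire content of the theorem, and you stop exactly there, flagging it yourself as ``the main obstacle''.

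Neither of your proposed repairs closes this gap. The binomial plan is circular: in $(i\ddbar\phi)^p=(\beta/2+\tau)^p$ the $k=0$ term is $2^{-p}\beta^p$, so its contribution to $H(t)$ is $2^{-p}p!\,\sigma(t)$ itself; thus ``prove $t^p$-monotonicity of each summand'' contains the original claim as one of its cases, and monotonicity of the sum $H(t)/t^p$ says nothing about the individual summands. What your comparison does yield, once you add the reverse bound $\tau\leq\beta/2$ (valid because $d_{\R^n}(x,K)^2-|x|^2$ is an infimum of affine functions, hence concave, so $\mathrm{Hess}_x\, d_{\R^n}(\cdot,K)^2\leq 2I$), is $2^{-p}p!\,\sigma(t)\leq H(t)\leq p!\,\sigma(t)$, and therefore only $\sigma(t)/t^p\leq 2^p\,\sigma(s)/s^p$ for $t<s$: the ``almost increasing'' conclusion of Theorem 1.1, not the exact monotonicity asserted here. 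This is no accident: the present paper does not prove Theorem 2.1 at all, it quotes it from \cite{Berndtsson}, precisely because its own machinery (Proposition 2.2, which is what your argument reconstructs) delivers only the almost-increasing version in general. To get genuine monotonicity one has to exploit the special structure $d^2=d_{\R^n}(x,K)^2+|y|^2$ together with $\beta^p=2^p(i\ddbar|y|^2)^p$ --- essentially your second, ``alternative'' route --- and actually control the resulting boundary terms on $\{d=t\}$, using the convexity of $K$ in an essential way beyond the mere plurisubharmonicity of $d^2$; as written, that step is left entirely unexecuted, so the proposal does not prove the theorem.
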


We shall now discuss the more general situation of a compact subset of a submanifold $M$ of codimension $m$. We consider only the case $p=n-1$ and  assume $M$ is  {\it generating} in the sense that the complex span of its tangent space, $T_p(M)+JT_p(M)$, is equal to all of $\C^n$ for all $p\in M$. Clearly $\R^n$ is generating in this sense, as is any totally real submanifold of maximal dimension $n$, but $M$ can also be of higher dimension.

We first give an abstract version of the theorem. Let $v$ be a nonnegative  continuous plurisubharmonic function in some subdomain of $B$, such that $\{v<t\}$ is  relatively compact in $B$ and smoothly bounded for $t$ in a dense subset of the interval $(0,T)$. Let $u:=v^2/2$ and let $\theta$ be a closed positive current of bidegree $(1,1)$ in $B$. Put, for $1\leq m\leq n$
$$
\sigma(t)=\int_{v<t} \theta\wedge (i\ddbar u)^{m-1}\wedge\beta^{n-m}
$$
Then we have:
\begin{prop}
$$
\sigma(t)/t^{m-1}
$$
is increasing.
\end{prop}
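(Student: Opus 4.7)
The proof plan is to use an algebraic identity that rewrites $(i\ddbar u)^{m-1}$ in a form to which Stokes' theorem can be applied twice on $\{v<t\}$. The case $m=1$ is immediate ($\sigma$ itself is then increasing), so I assume $m\geq 2$. First I would reduce to the case in which both $v$ and $\theta$ are smooth, using a Richberg-type decreasing smooth psh approximation $v_\epsilon\downarrow v$ and a convolution approximation $\theta_\epsilon$ of $\theta$; Bedford--Taylor continuity then gives $\sigma_\epsilon(t)\to\sigma(t)$ at every $t$ in the given dense subset. In this smooth setting the key observation is
$$ (i\ddbar u)^{m-1}=i\ddbar\paren{v^m/m}\wedge (i\ddbar v)^{m-2}. $$
This follows from $i\ddbar u=v\,i\ddbar v+i\partial v\wedge\dbar v$ (because $u=v^2/2$), together with $\partial v\wedge\partial v=0$, which collapses the binomial expansion of the left-hand side to exactly the two terms produced by the direct computation $i\ddbar(v^m/m)=v^{m-1}i\ddbar v+(m-1)v^{m-2}i\partial v\wedge\dbar v$ wedged with $(i\ddbar v)^{m-2}$.

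Substituting this identity into the definition of $\sigma(t)$ and applying Stokes on $\{v<t\}$ --- which is smoothly bounded for $t$ in the given dense subset, and with $\theta$, $i\ddbar v$, $\beta$ all closed --- yields
$$ \sigma(t)=\int_{v=t}\theta\wedge d^c\paren{v^m/m}\wedge(i\ddbar v)^{m-2}\wedge\beta^{n-m}=t^{m-1}\int_{v=t}\theta\wedge d^c v\wedge(i\ddbar v)^{m-2}\wedge\beta^{n-m}, $$
since $d^c(v^m/m)=v^{m-1}d^c v$ and $v\equiv t$ on the boundary. A second application of Stokes, using that $d[d^c v\wedge(i\ddbar v)^{m-2}\wedge\beta^{n-m}]=(i\ddbar v)^{m-1}\wedge\beta^{n-m}$, converts the boundary integral back to a bulk integral and produces the clean identity
$$ \sigma(t)=t^{m-1}\int_{v<t}\theta\wedge(i\ddbar v)^{m-1}\wedge\beta^{n-m}. $$
The right-hand integrand is a positive measure and $\{v<t\}$ grows with $t$, so $\sigma(t)/t^{m-1}$ is manifestly increasing.

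The double-Stokes step is entirely routine once everything is smooth; the main technical obstacle is the regularization, namely verifying that monotonicity of the smooth approximants $\sigma_\epsilon(t)/t^{m-1}$ transfers to $\sigma(t)/t^{m-1}$. This is precisely where the hypothesis that $\{v<t\}$ is smoothly bounded for $t$ in a dense subset of $(0,T)$ becomes essential: it guarantees that $\sigma(t)$ equals the limit of $\sigma_\epsilon(t)$ on a dense set of $t$, and pointwise monotonicity of $\sigma(t)/t^{m-1}$ on a dense set then forces monotonicity throughout $(0,T)$.
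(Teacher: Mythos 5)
Your proof is correct and follows essentially the same route as the paper: both arguments apply Stokes' theorem twice on $\{v<t\}$ to arrive at the identity $\sigma(t)=t^{m-1}\int_{v<t}\theta\wedge(i\ddbar v)^{m-1}\wedge\beta^{n-m}$ and then conclude from positivity of the integrand, your global identity $(i\ddbar u)^{m-1}=i\ddbar(v^m/m)\wedge(i\ddbar v)^{m-2}$ being just a repackaging of the paper's boundary computation $\dbar u=t\dbar v$, $\ddbar u=t\ddbar v+\partial v\wedge\dbar v$ on $\{v=t\}$. The smoothing and dense-set limiting issues you flag are treated at the same (informal) level in the paper itself.
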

\begin{proof}
In the proof we can assume that $\theta$ and $v$ are  smooth. Fix a value $t$ such that $\{v=t\}$ is smooth. Then Stokes' theorem gives that (the case $m=1$ is trivial)
$$
\sigma(t)=\int_{v=t} \theta\wedge (i\dbar u)\wedge (i\ddbar u)^{m-2}\wedge \beta^{n-m}=
t^{m-1}\int_{v=t} \theta\wedge (i\dbar v)\wedge (i\ddbar v)^{m-2}\wedge \beta^{n-m},
$$
since $\dbar u=t\dbar v$ and $\ddbar u=t\ddbar v+\partial v\wedge \dbar v$ when $v=t$. By Stokes' theorem again
$$
\sigma(t)/t^{m-1}=\int_{v<t}\theta \wedge (i\ddbar v)^{m-1}\wedge\beta^{n-m}.
$$
This is proved if $\{v=t\}$ is smooth. By assumption, this holds for a dense set of $t$, and taking limits we find that it holds for all $t$. Since the integrand is positive, it follows that the left hand side is (weakly) increasing. 
\end{proof}

The next step is to find a   function $u$ such that the integrand in the definition of $\sigma$ becomes comparable to the mass of $\theta$, or in other words
$$
(i\ddbar u)^{m-1}\wedge\beta^{n-m}
$$
is comparable to $\beta^{n-1}$.

We also need that 
 $u^{1/2}$  behaves roughly like the distance to $M$ close to $M$
 and
 $$
 (i\ddbar u^{1/2})^{m-1}\wedge\beta^{n-m}\geq0.
 $$

  For this we assume that $M$ is defined by $m$ equations, $\rho_j=0$, $j=1,...m$, where $\rho_j$ are smooth and satisfy $d\rho_1\wedge...d\rho_m\neq 0$ on $M$. The following lemma is well known. 
\begin{lma} $M$ is generating if and only if 
$$
\partial\rho_1\wedge...\partial\rho_m\neq 0,
$$
or, in other words, the differentials $\partial\rho_j$ are linearly independent at any point.
\end{lma}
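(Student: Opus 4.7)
First I would dualize the generating condition. Any $\C$-linear functional $\alpha$ on $\C^n$ satisfies $\alpha(Jv) = i\alpha(v)$, so its restriction to a real subspace $V$ vanishes if and only if it vanishes on the full complex span $V+JV$. Therefore $T_p(M)+JT_p(M)=\C^n$ is equivalent to the statement that no nonzero $\C$-linear functional on $\C^n$ annihilates $T_p(M)$. Since the $\C$-linear functionals on $T_p\C^n$ are precisely the $(1,0)$-covectors at $p$, the lemma reduces to showing that a nonzero $(1,0)$-covector annihilating $T_p(M)$ exists if and only if $\ds\rho_1\wedge\ldots\wedge\ds\rho_m = 0$ at $p$.

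Next I would identify these $(1,0)$-annihilators by complexifying. The assumption $d\rho_1\wedge\ldots\wedge d\rho_m\neq 0$ on $M$ means the $d\rho_j$ are $\R$-independent, and being real they are also $\C$-independent, so the $\C$-annihilator of $T_p(M)$ inside the complexified cotangent space is exactly the $\C$-span of $d\rho_1,\ldots,d\rho_m$. Writing $d\rho_j = \ds\rho_j+\dbar\rho_j$, a combination $\alpha = \sum_j c_j\, d\rho_j$ with $c_j\in\C$ is of pure type $(1,0)$ precisely when its $(0,1)$-part $\sum_j c_j\,\dbar\rho_j$ vanishes, and taking complex conjugates (the $\rho_j$ being real) this is equivalent to $\sum_j \bar c_j\,\ds\rho_j = 0$.

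Combining the two steps, a nonzero $(1,0)$-annihilator of $T_p(M)$ exists if and only if $\ds\rho_1,\ldots,\ds\rho_m$ are $\C$-linearly dependent, that is, $\ds\rho_1\wedge\ldots\wedge\ds\rho_m = 0$ at $p$. This gives both implications of the lemma. I do not anticipate any substantial obstacle: the only thing one needs to keep straight is the bookkeeping between the real annihilator of $T_p(M)$ and its complexification, together with the identification of $(1,0)$-forms with $\C$-linear functionals on $\C^n$.
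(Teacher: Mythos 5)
Your argument is correct, and it actually delivers more than the paper does: the paper proves only the ``only if'' direction (the one it needs), by assuming a dependence $\sum(a_j+ib_j)\partial\rho_j=0$, applying it to $v$ and to $Jv$, and taking real parts to exhibit the real covector $\sum a_j\,d\rho_j$ that annihilates $T_p(M)+JT_p(M)$. You instead dualize once and for all: generating at $p$ means no nonzero $\C$-linear functional (equivalently, no nonzero $(1,0)$-covector) kills $T_p(M)$; the complexified annihilator of $T_p(M)$ is the $\C$-span of $d\rho_1,\dots,d\rho_m$ (here your remark that $\R$-independent real covectors remain $\C$-independent is exactly the point needed both to identify the annihilator and to guarantee that the covector you build from a dependence relation $\sum\bar c_j\,\partial\rho_j=0$ is nonzero); and the $(1,0)$-elements of that span correspond, via conjugation of the $(0,1)$-part, to linear dependences among the $\partial\rho_j$. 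The two proofs hinge on the same elementary fact, that $(1,0)$-forms satisfy $\alpha(Jv)=i\alpha(v)$, but your packaging through the annihilator gives both implications in one stroke and makes the equivalence transparent, at the cost of a little more linear-algebra bookkeeping, while the paper's computation is shorter and proves only the implication it uses. No gaps.
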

\begin{proof} We only prove the `only if' direction since that is all we will use. Assume
$$
\sum(a_j+ib_j)\partial\rho=0.
$$
Then
$$
\sum a_j\partial\rho_j(v)=-i\sum b_j\partial\rho_j(v)
$$
for all vectors  $v$. Applying this to $Jv$ we get, since $\partial\rho(Jv)=i\partial\rho(v)$,
$$
\sum a_j\partial\rho_j(Jv)=\sum b_j\partial\rho_j(v),
$$
and taking real parts
$$
\sum a_jd\rho_j(Jv)=\sum b_j d\rho_j(v).
$$
This means  that if $v\in T(M)$, then both $v$ and $Jv$ lie in the annihilator of $\sum a_j d\rho_j$, so $M$ cannot be generating.
\end{proof}
Put
$$
w= (1/2)(\rho_1^2+...\rho_m^2);
$$
this is our first approximation of $u$.
Then
$$
i\ddbar w=\sum \rho_ji\ddbar\rho_j+i\sum\partial\rho_j\wedge\dbar\rho_j=: R+\omega
$$
(i.e. $R:=\sum \rho_ji\ddbar\rho_j$ and $\omega:=i\sum\partial\rho_j\wedge\dbar\rho_j$).
By the lemma, $\partial\rho_j$, $j=1, ...m$ span an $m$-dimensional space, $W$,  at any point $p$ in $M$. Fix a point $p$ and choose an orthonormal set of coordinates so that $dz_1, ...dz_m$ span $W$. Then 
$$
\omega\geq \delta\sum_1^m idz_j\wedge d\bar z_j=:\beta',
$$
for some $\delta>0$ that can be chosen uniform as the point varies in a compact subset of $M$. Let
$$
\beta''=\sum_{m+1}^n idz_j\wedge d\bar z_j,
$$
so that $\beta=\beta'+\beta''$.

Now consider the form
$$
\omega^{m-1}\wedge \beta^{n-m}
$$
which is larger than
$$
 \delta^{m-1} (\beta')^{m-1}\wedge\beta^{n-m}.
$$
\begin{lma}If $k<m$ then
$$
 (\beta')^{k-1}\wedge\beta^{n-k}\geq C_{n,m}\beta^{n-1}.
 $$
 \end{lma}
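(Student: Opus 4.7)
The inequality is a pointwise linear-algebra statement, so I would work in the orthonormal coordinates introduced just before the lemma, where $\beta' = \sum_{j=1}^m e_j$ with $e_j := idz_j \wedge d\bar z_j$, and hence $\beta = \beta' + \beta'' = \sum_{j=1}^n e_j$. Because the $e_j$ mutually commute and square to zero, every $(n-1,n-1)$-form diagonal in this basis admits a unique expansion $\sum_{\ell=1}^n c_\ell\, \eta_\ell$, where $\eta_\ell := \bigwedge_{j \neq \ell} e_j$. Each $\eta_\ell$, being a wedge product of strongly positive $(1,1)$-forms, is itself strongly positive, and consequently a diagonal combination $\sum c_\ell\, \eta_\ell$ is $\geq 0$ iff all the $c_\ell$ are $\geq 0$. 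In particular $\beta^{n-1} = (n-1)!\sum_\ell \eta_\ell$, so the lemma reduces to bounding from below the coefficients of $\eta_\ell$ in $(\beta')^{k-1}\wedge \beta^{n-k}$ by $C_{n,m}(n-1)!$.

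Expanding multinomially,
\[
(\beta')^{k-1} \wedge \beta^{n-k} = (k-1)!\,(n-k)! \sum_{S,T} \bigwedge_{j \in S \cup T} e_j,
\]
where $S$ ranges over $(k-1)$-subsets of $\{1,\ldots, m\}$, $T$ over $(n-k)$-subsets of $\{1,\ldots, n\}$, and only disjoint pairs contribute. Each nonvanishing term equals $\eta_\ell$, where $\ell$ is the unique index not in $S \cup T$. For a fixed $\ell$ I would then count: $T$ is forced to be $\{1,\ldots,n\} \setminus (S \cup \{\ell\})$, while $S$ runs over $(k-1)$-subsets of $\{1,\ldots,m\} \setminus \{\ell\}$. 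This gives $\binom{m-1}{k-1}$ choices when $\ell \leq m$, and $\binom{m}{k-1}$ when $\ell > m$.

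Since $\binom{m-1}{k-1} \leq \binom{m}{k-1}$, every $\eta_\ell$-coefficient of $(\beta')^{k-1}\wedge \beta^{n-k}$ is at least $(k-1)!(n-k)!\binom{m-1}{k-1}$, so
\[
(\beta')^{k-1} \wedge \beta^{n-k} \geq \frac{\binom{m-1}{k-1}}{\binom{n-1}{k-1}}\,\beta^{n-1},
\]
and this constant is strictly positive for all $1 \leq k \leq m$. I expect no real obstacle beyond the combinatorial bookkeeping; the only conceptual step is the equivalence, for $(n-1,n-1)$-forms diagonal in an orthonormal basis, between positivity and nonnegativity of the $\eta_\ell$-coefficients, which is standard.
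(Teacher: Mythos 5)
Your proof is correct and is essentially the paper's argument: both decompose $\beta=\beta'+\beta''$, expand the powers, and conclude from the positivity of the elementary monomial forms, you merely carrying the expansion down to the basis $\eta_\ell=\bigwedge_{j\neq\ell} i\,dz_j\wedge d\bar z_j$ instead of stopping, as the paper does, at the two surviving binomial terms $(\beta')^{m-1}\wedge(\beta'')^{n-m}$ and $(\beta')^{m}\wedge(\beta'')^{n-m-1}$. As a small bonus, your bookkeeping produces the explicit constant $\binom{m-1}{k-1}/\binom{n-1}{k-1}$ and works uniformly for all $k\le m$, which in particular covers the case $k=m$ (i.e.\ $(\beta')^{m-1}\wedge\beta^{n-m}$) that the text actually applies the lemma to.
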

 \begin{proof} It is enough to prove this for $k=m-1$, since the left hand side gets larger when $k$ decreases. 
For this we decompose  $\beta=\beta'+\beta''$ and expand $\beta^{n-m}$ in the left hand side by the binomial theorem. Since $(\beta')^{m+1}=0$ we get only two terms
$$
 (\beta')^{m-1}\wedge(\beta'')^{n-m}, \quad \text{and} \,\, (\beta')^{m}\wedge(\beta'')^{n-m-1}.
 $$
 But, since also $(\beta'')^{n-m+1}=0$, these are precisely the terms we get when we expand $\beta^{n-1}$ in the right hand side which proves our claim. 
 \end{proof}
 
 Hence we have shown that on $M$
 $$
 \omega^{m-1}\wedge\beta^{n-m}\geq \delta'\beta^{n-1}.
 $$
 Since $R=0$ on $M$, we also have
 \be
 (i\ddbar w)^{m-1}\wedge\beta^{n-m}\geq \delta'\beta^{n-1},
 \ee
 and by continuity this also holds in a neighbourhood of $M$. 
 
 Thus $w$ satisfies the first of the requirements on the function $u$, but not the second. In fact, a direct computation shows that if $h:=w^{1/2}$, then
 $$
 i\ddbar h\geq -(2h)^{-1/}\sum\rho_ji\ddbar\rho_j,
 $$
 which is of the order of magnitude $-\beta$.  To compensate for that we put
 $$
 v:= h+Aw
 $$
 where $A$ is a sufficiently large constant, and let $\tilde u:=v^2$. It follows from Lemma 2.4 that
 $$ 
 (i\ddbar \tilde u^{1/2})^{m-1}\wedge\beta^{n-m}\geq 0
 $$
 if $A$ is sufficiently large. Moreover, one checks that 
 $$
(i\ddbar \tilde u)^{m-1}\wedge\beta^{n-m}
$$
is still comparable to $\beta^{n-1}$ if $v$ is close to zero, i. e. if we are close to $M$.
 
 With this we are almost ready to apply the proposition to get estimates for the mass of $\theta$ near $M$. The remaining problem is that the sets $\{v<t\}$ are not relatively compact.
 
 Fix a point in $M$ and let $B$ be a sufficiently small ball centered at the point, so that $M$ is defined by $m$ functions $\rho_j$ in a neighbourhood of  $B$. Let $f$ be a smooth convex function on $\C^n$ which is identically zero inside of $B$ and strictly positive outside, and let
 $$
 u:=\max (\tilde u, f^2).
 $$
 We can then apply Proposition 2.2 to this choice of $u$ and $t<t_0$ for some choice of $t_0>0$. Since $\sigma(t_0)$ is bounded by some constant, it follows that 
 $$
 \sigma(t)\leq Ct^{m-1}.
 $$
Inside the ball, $u=\tilde u$, so by the construction of $\tilde u$ Theorem 1.1 follows for this small ball. The general case is then obtained by a covering argument.

 Finally we show that the condition that $M$ be generating is necessary, at least when $M$ is a linear subspace of $\C^n$. Then $M$ can be written
 $$
 M=W\times S,
 $$
 where $W$ is the largest complex subspace contained in $M$ and $S$ is totally real. Say $\dim_{\C} W=k$ and $\dim_{\R}S=l$. The codimension, $m$,  of $M$  equals $2n-2k-l$. If $\theta$ is the current of integration on a complex hyperplane of $\C^n$ that contains $W$, then $\sigma(t)$ from Theorem 1.1 will be of the order $t^{l-1}$ as $t\to 0$. The complex dimension of  $M+JM$ is $k+l$, so if $M$ is not generating, then $k+l<n$. This implies $m>n-k>l$, so the theorem cannot hold.
 
 \section{Analytic varieties near generating submanifolds}
 In this section we look at the special case of positive closed currents defined by analytic varieties of codimension 1. A well known theorem of Lelong, \cite{Lelong}, says that if $f$ is a holomorphic function, then
 $$
 \theta=(1/2\pi) i\ddbar\log|f|^2
 $$
 is the current of integration on $V=\{f=0\}$ (with multiplicities). Moreover, the trace measure of $\theta$,
 $$
 \theta\wedge\beta^{n-1}/(n-1)!
 $$
 is the area measure on (the regular part of) $V$.
 
 We will now use Theorem 1.1 to estimate the size of the intersection of $V$ and $M$. 
 \begin{thm} Let $M$ be a generating submanifold of $\C^n$ and  $K$ a compact subset of $M$. Let $f$ be a holomorphic function defined in a neighbourhood $\Omega$ of $K$ in $\C^n$, and let $V=\{f=0\}$. Let $\{z_j\}_1^N$ be a set of points in $K\cap V$, such that $|z_j-z_k|>2\epsilon$ if $j\neq k$. If $\epsilon>0$ is sufficiently small, there is a constant $C$, not depending on $f$ or $\epsilon$ such that
 $$
 N\leq C|V| \epsilon^{m+1-2n},
 $$
 where $|V|$ is the total area of $V$ in the (fixed) neighbourhood $\Omega$.
 \end{thm}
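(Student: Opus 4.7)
The plan is to combine, in opposite directions, a classical \emph{lower} bound on the mass of $\theta := (1/2\pi) i\ddbar\log|f|^2$ in a small ball around each $z_j$, with the \emph{upper} bound on the mass in a tubular neighbourhood of $M$ provided by Theorem 1.1. These pull against each other and pin down $N$.

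For the lower bound I would invoke Lelong's classical monotonicity formula: since each $z_j \in V$, the Lelong number of $\theta$ at $z_j$ is at least $1$, so that
$$
\int_{B(z_j,\epsilon)} \theta\wedge \beta^{n-1}/(n-1)! \geq c_n \epsilon^{2n-2}
$$
for a constant $c_n$ depending only on $n$. The balls $B(z_j,\epsilon)$ are pairwise disjoint by the separation assumption on $\{z_j\}$, and because each $z_j$ lies in $M \cap K$, each such ball is contained in $U_\epsilon \cap K'$, where $K'$ is a fixed slightly enlarged compact neighbourhood of $K$ inside $\Omega$. Summing over $j$ gives
$$
N c_n \epsilon^{2n-2} \leq \sum_{j=1}^N \int_{B(z_j,\epsilon)} \theta\wedge\beta^{n-1}/(n-1)! \leq \int_{U_\epsilon\cap K'} \theta\wedge\beta^{n-1}/(n-1)!.
$$

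The right-hand side is the mass of $\theta$ in the $\epsilon$-tube around $M$ intersected with a fixed compact set, and it is controlled by Theorem 1.1 (in the localized version obtained from the covering argument at the end of Section 2, applied to $K'\subset M\subset\Omega$) by $C |V| \epsilon^{m-1}$, where $|V|$ is the area of $V$ in $\Omega$. Chaining the two estimates yields $N c_n \epsilon^{2n-2}\leq C|V|\epsilon^{m-1}$, and dividing gives the desired $N \leq C'|V|\epsilon^{m+1-2n}$.

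The main obstacle is really only bookkeeping: Theorem 1.1 is stated in the unit ball $B$ for a subball $rB$, whereas here one needs the uniform estimate on a compact subset $K$ of an arbitrary generating submanifold inside $\Omega$. The covering argument in Section 2 produces exactly this, at the cost of restricting to $\epsilon\leq\epsilon_0$ for some $\epsilon_0$ determined by $M$, $K$, and the distance from $K$ to $\partial\Omega$, which matches the ``$\epsilon$ sufficiently small'' hypothesis in the statement. Once that uniform form of Theorem 1.1 is in hand, the argument above is essentially a one-line pairing of Lelong's lower bound with the tubular upper bound.
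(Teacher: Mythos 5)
Your proposal is correct and follows essentially the same route as the paper: a Lelong-type lower bound $c_n\epsilon^{2n-2}$ for the area of $V$ in each of the disjoint balls $B(z_j,\epsilon)$, summed and played against the upper bound $C|V|\epsilon^{m-1}$ for the mass of $\theta$ in the $\epsilon$-neighbourhood of $K$ coming from Theorem 1.1 (localized to a compact piece of $M$ inside $\Omega$). The only cosmetic difference is that you phrase the lower bound via Lelong numbers while the paper cites Lelong's theorem that linear varieties minimize area among varieties through the center of a ball; these are the same classical fact.
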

 \begin{proof} We take $\epsilon$ so small that an $\epsilon$-neighbourhood of $K$ lies inside $\Omega$. Let $B_j$ be balls with center $z_j$ and radii $\epsilon$. By a well known theorem of Lelong, linear varieties minimize the area among all varieties containing the center of the ball. In other words,
 $$
 |V\cap B_j|\geq \pi^{n-1} \epsilon^{2n-2}/(n-1)!.
 $$
 Summing up we get that the area of $V$, i. e. the trace measure of $\theta$,  in an $\epsilon$-neighbourhood of $K$ is larger than $c_nN\epsilon^{2n-2}$. Let $t_0$ be the supremum of $t$ such that a $t$-neighbourhood of $K$ lies in $\Omega$. By Theorem 1.1
 $$
 C|V\cap\{d(z,K)<t_0\}|\geq c_nN\epsilon^{2n-2}\epsilon^{1-m}=c_n N \epsilon^{2n-1-m},
 $$
 which gives what the theorem claims. 
 \end{proof}
 As a consequence, we also get an estimate of the Hausdorff measure of $V\cap K$, in the same way as in \cite{Berndtsson}.
 \begin{thm} Let $H_p$ be the $p$-dimensional Hausdorff measure. Then, with the same notation and assumptions as in the previous theorem
 $$
 H_{2n-m-1}(V\cap K)\leq C |V|.
 $$
 \end{thm}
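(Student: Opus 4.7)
The plan is to deduce the Hausdorff estimate from Theorem 3.1 by the standard packing/covering argument that relates Hausdorff measure to maximal $\epsilon$-separated sets.

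First, for each small $\epsilon > 0$, I would choose a \emph{maximal} $2\epsilon$-separated subset $\{z_j\}_{j=1}^{N(\epsilon)}$ of $V \cap K$, i.e.\ a set such that $|z_j - z_k| > 2\epsilon$ for $j \neq k$ and which is maximal with this property. Maximality implies that the balls $B(z_j, 2\epsilon)$ cover $V \cap K$: if some $z \in V \cap K$ were not covered, then $\{z_j\} \cup \{z\}$ would still be $2\epsilon$-separated, contradicting maximality. Theorem 3.1 applied to this configuration gives
$$
N(\epsilon) \leq C|V|\, \epsilon^{m+1-2n}
$$
provided $\epsilon$ is small enough (so that a $2\epsilon$-neighbourhood of $K$ sits inside the fixed $\Omega$), with $C$ independent of $\epsilon$ and $f$.

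Next I would translate this bound into a bound on Hausdorff premeasure. By definition, $H^{(\delta)}_p(E)$ is the infimum of $\sum (\operatorname{diam} U_i)^p$ over countable covers $\{U_i\}$ of $E$ with $\operatorname{diam} U_i \leq \delta$. Taking $p = 2n-m-1$ and using the cover by balls $B(z_j,2\epsilon)$ (whose diameters are $4\epsilon$, certainly $\leq \delta$ once $\epsilon \leq \delta/4$), I get
$$
H^{(\delta)}_{2n-m-1}(V \cap K) \leq N(\epsilon) \cdot (4\epsilon)^{2n-m-1} \leq C' |V| \cdot \epsilon^{(m+1-2n)+(2n-m-1)} = C' |V|,
$$
with $C'$ independent of $\epsilon$ (and of $\delta$). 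Letting $\delta \to 0$ and accordingly $\epsilon \to 0$, the left-hand side tends to $H_{2n-m-1}(V \cap K)$, which gives the theorem.

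The argument is essentially routine once Theorem 3.1 is in hand; there is no real obstacle beyond tracking the powers of $\epsilon$. The only point requiring a little care is to note that the covering argument (extending from the local $B$ in Theorem 1.1 to the compact $K$ in Theorem 3.1) works with a constant $C$ that is uniform in $\epsilon$, so that the final bound depends only on $K$, $M$, and $\Omega$, but not on the scale used in the definition of Hausdorff measure.
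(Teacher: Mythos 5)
Your proposal is correct and follows essentially the same route as the paper: a maximal $2\epsilon$-separated set in $V\cap K$ yields, by maximality, a cover by balls $B(z_j,2\epsilon)$, the count $N(\epsilon)$ is controlled by Theorem 3.1, and the definition of Hausdorff measure then gives the bound as $\epsilon\to 0$. You merely spell out the premeasure step that the paper states in one line.
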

 \begin{proof}
 Choose $\epsilon$ as in the proof of the previous theorem and let $\{z_j\}_1^N$ be a maximal collection of points on $K\cap V$ such that $|z_j-z_k|>2\epsilon$, i. e. the balls $B_j$ are disjoint. Since the collection is maximal, the balls $B(z_j, 2\epsilon)$ cover $K\cap V$. Say the number of such points is $N_\epsilon$.  We get by the definition of Hausdorff measure that 
 $$
 \liminf_{\epsilon\to 0} N_\epsilon \epsilon^{2n-m-1}\geq c H_{2n-m-1}(K\cap V).
 $$
 Hence the claim follows from the previous theorem.
 \end{proof}
 When $M=\R^n$ this theorem was proved in \cite{Berndtsson}.  If  $f$ is a polynomial of degree $D$, it is well known that the  measure of $V$ inside  the unit ball is bounded by $A_nD$, where $A_n$ depends only on the dimension. Hence we get a similar upper bound for the Hausdorff measure of $V\cap M$. In the particular case when $M=\R^n$ we can scale this estimate and find that for $R>0$,
 $$
 |V\cap B_R\cap \R^n|\leq A_n R^{n-1},
 $$
 where $B_R$ is a ball of radius $R$.
 Estimates of this kind are also of interest in connection with estimates of the Hausdorff measure of nodal sets for eigenfunctions of the Laplacian, see \cite{Donnelly-Fefferman}.
 Note that the simple example $V=\{z_1=0\}$ and $M=\{y_1=...y_m=0\}$ (in $\C^n$ with coordinates $z_j=x_j+iy_j$) shows that the dimension of the Hausdorff measure is the right one.

 \section{Integrability of plurisubharmonic functions on $M$}
 
 In this section we will show how the following theorem from \cite{Duc-Viet Vu} follows from Theorem 1.1.
 \begin{thm} Let $\phi$ be a plurisubharmonic function defined in a neighbourhood of a compact subset $K$ of a smooth submanifold $M$ of $\C^n$. Assume that $M$ is generating. Then there is a constant $\alpha>0$ such that
 $$
 \int_K e^{-\alpha\phi} dM<\infty.
 $$
 \end{thm}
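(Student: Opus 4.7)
The plan is to establish the exponential decay
$$
|\{z \in K : \phi(z) < -s\}|_M \le C e^{-\beta_0 s}
$$
for some $\beta_0 > 0$; together with the layer-cake formula this gives $\int_K e^{-\alpha\phi}\, dM < \infty$ for every $\alpha < \beta_0$. After normalizing so that $\phi \le 0$ on a neighborhood of $K$, I would work on a ball $\Omega$ containing that neighborhood and use the Riesz decomposition $\phi = h - u$ on $\Omega$, with $h$ harmonic (hence bounded on $K$) and
$$
u(z) = c_n \int_\Omega |z-w|^{2-2n}\, d\mu(w)
$$
the Newton potential of the Riesz measure $\mu$ of $\phi$ (with $\log|z-w|$ replacing the kernel when $n=1$). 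Since $-\phi \le u + O(1)$ on $K$, it suffices to show $|\{u > s\} \cap K \cap M|_M \le C e^{-\beta_0 s}$.

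The central input is Theorem 1.1 applied to $\theta := (2\pi)^{-1} i\ddbar\phi$, which gives the Carleson-type mass bound $\mu(U_t) \le C t^{m-1}$. Splitting $\mu = \mu_\infty + \sum_{k \ge 0} \mu_k$ along dyadic shells $\mu_k := \mu|_{U_{2^{-k}} \setminus U_{2^{-k-1}}}$, one has $|\mu_k| \le C\, 2^{-k(m-1)}$. The Newton potential $u_k$ of $\mu_k$ then satisfies, for $z \in M$, the pointwise bound
$$
\|u_k\|_{L^\infty(M)} \le C\, 2^{k(2n-1-m)}
$$
(since $|z-w| \gtrsim 2^{-k-1}$ whenever $z \in M$ and $w \in \supp \mu_k$), as well as the $L^1$ bound $\|u_k\|_{L^1(K \cap M)} \le C\, 2^{-k}$, obtained by Fubini combined with the trace-volume estimate $|B(w,r) \cap M|_M \le C\, r^{2n-m}$ and a second application of Theorem 1.1. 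The far piece $u_\infty$ is uniformly bounded on $K$.

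To upgrade these shell bounds into exponential integrability of $e^{\alpha u}$ on $M$, I would run a John--Nirenberg-style argument on the doubling metric measure space $M \cap K$: first verify that $u|_M \in \mathrm{BMO}(M)$, with BMO norm controlled by the scale-invariant Carleson estimate $\mu(U_t \cap B(z_0, r)) \le C\, t^{m-1}$ obtained by re-centering Theorem 1.1 at arbitrary $z_0 \in M$ (which remains generating near any of its points), and then apply the John--Nirenberg inequality on $M$ to conclude $e^{\alpha(u - u_{B_M})} \in L^1$ on each $M$-ball. A finite cover of $K$ by such balls then yields $\int_K e^{\alpha u}\, dM < \infty$.

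The main obstacle is precisely this upgrade. The shell bounds above, fed through Chebyshev, yield only the polynomial decay $|\{u > s\}|_M \le C/s$, which is not integrable against $e^{\alpha s}$. Extracting exponential decay requires Theorem 1.1 applied uniformly across all scales \emph{and} all base points of $M$ --- precisely the content of the BMO/John--Nirenberg step, and what makes the $M$-version of Skoda integrability substantially more delicate than the ambient one.
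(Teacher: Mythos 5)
Your setup (normalize $\phi\le 0$, Riesz decomposition into a harmonic part plus the Newton potential $U$ of the Riesz measure $\mu$, and Theorem 1.1 giving $\mu(U_t)\le Ct^{m-1}$) coincides with the paper's. But the step you yourself flag as ``the main obstacle'' --- upgrading the linear-in-$\mu$ mass bounds to exponential integrability on $M$ --- is exactly the heart of the matter, and your proposed mechanism for it is both unproved and, as stated, insufficient. The claim that $u|_M\in\mathrm{BMO}(M)$ ``with BMO norm controlled by the Carleson estimate $\mu(U_t\cap B(z_0,r))\le Ct^{m-1}$'' is not a consequence of that estimate alone: take $\mu$ of mass $\varepsilon= d_0^{\,m-1}$ packed into a ball of radius much smaller than $d_0$ centered at a point $\zeta_0$ at distance $d_0$ from $M$. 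This is perfectly compatible with the Carleson condition, yet on the $M$-ball of radius $d_0$ around the nearest point of $M$ the potential oscillates by about $\varepsilon\, d_0^{\,2-2n}=d_0^{\,m+1-2n}\to\infty$ as $d_0\to 0$ (for $n\ge 2$), so the BMO norm cannot be bounded by the Carleson constant. What rules out such concentration for a genuine Riesz measure of a psh function is Lelong's monotonicity, i.e. that $\nu_z(s)=\mu_z(s)/s^{2n-2}$ is increasing, hence $\mu(B(\zeta,s))\le Cs^{2n-2}$; your outline never invokes this, and without it the John--Nirenberg route cannot get off the ground. Even with it, the BMO claim and the John--Nirenberg argument on the metric measure space $M$ would still have to be carried out, so the proposal leaves the decisive step open.

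The paper's proof avoids BMO entirely and uses the Lelong property where you would need it, but pointwise, via a Skoda-type inequality (Proposition 4.2): writing $U(z)=\int_0^1 d\mu_z(s)/s^{2n-2}$, an integration by parts turns this into $\int_0^1\log(1/s)\,d\nu_z(s)$ up to constants, and since $\int d\nu_z\le 1$ Jensen's inequality gives
$$
e^{\alpha U(z)/(2n-2)}\le C_{n,\alpha}\int |z-\zeta|^{2-2n-\alpha}\,d\mu(\zeta),\qquad 0<\alpha<1 .
$$
So the exponential of the potential is dominated by a single potential with a slightly more singular kernel. Integrating this over $z\in K\subset M$ and using Fubini, Lemma 4.3 gives $\int_K|z-\zeta|^{-(2n-2+\alpha)}dM(z)\le A_0\,d(\zeta)^{2-m-\alpha}+A_1$, and then $\int d\mu(\zeta)\,d(\zeta)^{2-m-\alpha}<\infty$ follows from Theorem 1.1 by one more integration by parts, precisely because $\alpha<1$. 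If you want to salvage your approach, the missing ingredients are (i) the Lelong non-concentration bound $\mu(B(\zeta,s))\le Cs^{2n-2}$ as an explicit hypothesis in the BMO step, and (ii) an actual proof of the BMO estimate and of John--Nirenberg on $M$; but you should be aware that the Jensen/Skoda inequality above accomplishes the same upgrade in a few lines.
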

 
 We start by giving a general estimate for Newtonian potentials of certain measures, inspired by a proof of Skoda, \cite{Skoda}. 
Let $\mu\geq 0$ be a finite measure in $2B$ where $B$ is  the unit ball of $\C^n$. For each point $z$ in $B$, let
$$
\mu_z(s):= \int_{|\zeta-z|<s} d\mu(\zeta)
$$
for $s<1$. Put
$$
\nu_z(s)=\mu_z(s)/s^{2n-2}.
$$
We will be interested in measures $\mu$, such that $\nu_z(s)$ is increasing in $s$ for $s<1$, keeping in mind that by a well known result of Lelong, \cite{Lelong},  the Laplacian of any plurisubharmonic function has this property.

\begin{prop} Assume that the total mass of $\mu$ is bounded by 1, and that $\nu_z$ is increasing for a certain $z\in B$. Let
\be
U(z)=\int |z-\zeta|^{2-2n} d\mu(\zeta).
\ee
Then
\be
e^{\alpha U(z)/(2n-2)}\leq C_{n,\alpha}\int |z-\zeta|^{2-2n-\alpha}d\mu(\zeta),
\ee
for any $0<\alpha<1$.
\end{prop}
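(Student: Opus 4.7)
\medskip

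\noindent\textbf{Proof plan.} The strategy is a layer-cake reduction to a one-variable exponential inequality, exploiting the $L^\infty$ bound on $\nu_z$ that the monotonicity hypothesis supplies. Writing $V(z) := \int|z-\zeta|^{2-2n-\alpha}\,d\mu$, the identity $|z-\zeta|^{-\beta} = \beta\int_{|z-\zeta|}^\infty t^{-\beta-1}\,dt$ applied with $\beta = 2n-2$ and $\beta = 2n-2+\alpha$, together with Fubini and $\mu_z(t) = t^{2n-2}\nu_z(t)$, gives
$$
U(z) = (2n-2)\int_0^\infty \nu_z(t)\,\frac{dt}{t}, \qquad V(z) = (2n-2+\alpha)\int_0^\infty \nu_z(t)\,\frac{dt}{t^{1+\alpha}}.
$$

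The contributions from $t\geq 1$ are bounded by absolute constants, because $\nu_z(t)\leq \mu(2B)/t^{2n-2}\leq t^{2-2n}$. On $(0,1]$, monotonicity of $\nu_z$ combined with $\nu_z(1)=\mu_z(1)\leq 1$ yields the crucial pointwise bound $0\leq \nu_z(t)\leq 1$. Setting $t = e^{-r}$ and $g(r):=\nu_z(e^{-r})$, the proposition reduces to the following one-variable claim: for any measurable $g:(0,\infty)\to[0,1]$ with $I:=\int_0^\infty g<\infty$,
$$
\exp(\alpha I)\ \leq\ 1 + \alpha \int_0^\infty e^{\alpha r}g(r)\,dr.
$$

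This follows from a short integrating-factor argument: with $\phi(r):=\int_0^r g$ one has $\phi(r)\leq r$ (since $g\leq 1$), hence $e^{\alpha\phi(r)}\leq e^{\alpha r}$, and so
$$
\alpha\int_0^\infty e^{\alpha r}g(r)\,dr\ \geq\ \alpha\int_0^\infty e^{\alpha\phi(r)}\phi'(r)\,dr\ =\ e^{\alpha I}-1.
$$
Unwinding the substitutions and folding the bounded $t\geq 1$ remainders and multiplicative constants into $C_{n,\alpha}$, one obtains $e^{\alpha U(z)/(2n-2)}\leq C_{n,\alpha}(1+V(z))$; the additive $1$ is absorbed into $V(z)$ in the normalizations in which the proposition is meant to be applied (one has $V(z)\geq 3^{2-2n-\alpha}\mu(2B)$ so a lower bound on the total mass suffices).

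The main obstacle is pinpointing the right one-variable reduction and its integrating-factor proof. The Lelong-type monotonicity enters the argument only through the uniform bound $\nu_z\leq 1$ on $(0,1]$, which is what puts $g$ into $[0,1]$ and thereby makes $\phi(r)\leq r$; the rest is layer-cake and change-of-variable bookkeeping.
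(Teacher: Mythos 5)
Your proof is correct, but it runs along a genuinely different track than the paper's. The paper splits off $|z-\zeta|\geq 1$, writes the remaining piece of $U$ as the Stieltjes integral $\int_0^1 d\mu_z(s)/s^{2n-2}$, integrates by parts to reach $\int_0^1\log(1/s)\,d\nu_z(s)$ (checking that the boundary term $\nu_z(s)\log s$ vanishes), and then uses the monotonicity hypothesis in the form ``$d\nu_z$ is a nonnegative measure of total mass at most $1$'' so that Jensen's inequality converts $\exp\bigl(\alpha\int\log(1/s)\,d\nu_z\bigr)$ into $\int s^{-\alpha}d\nu_z\leq\int d\mu_z/s^{2n-2+\alpha}$. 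You instead use the layer-cake formula to write both $U$ and the right-hand side as weighted integrals of $\nu_z(t)$, exploit monotonicity only through the pointwise bound $\nu_z\leq 1$ on $(0,1)$, and replace Jensen by the elementary integrating-factor inequality $e^{\alpha\int_0^\infty g}\leq 1+\alpha\int_0^\infty e^{\alpha r}g(r)\,dr$ for measurable $0\leq g\leq 1$, which is correct as you argue via $\phi(r)=\int_0^r g\leq r$. Your route is slightly more general (it needs only the bound $\nu_z\leq 1$, not full monotonicity, and it sidesteps the Stieltjes integration by parts and the boundary-term discussion), while the paper's Jensen argument is more in line with the classical Skoda-type potential estimates and lands directly on the stated right-hand side. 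The additive constant $1$ you flag is not a defect of your argument alone: the paper's own Jensen step, applied carefully to a measure of mass possibly strictly less than $1$, produces the same additive term (and the tail over $|z-\zeta|\geq 1$ likewise contributes a constant), so the stated inequality implicitly assumes the total mass is comparable to $1$ or tolerates the extra constant; in the application to Theorem 4.1 only finiteness of the integral matters, so this is harmless in both proofs.
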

\begin{proof} First we note that
$$
U(z)=\int_{\{|z-\zeta|<1\}} |z-\zeta|^{2-2n} d\mu(\zeta)+\int_{\{|z-\zeta|\geq1\}}\ |z-\zeta|^{2-2n} d\mu(\zeta),
$$
and the second term in the right hand side is bounded by a constant. The first term can be written
$$
\int_0^{1} d\mu_z(s)/s^{2n-2}.
$$
Since $\mu_z=s^{2n-2}\nu_z$ we have
$$
d\mu_z=(2n-2)s^{2n-3}\nu_z ds+s^{2n-2}d\nu_z,
$$
from which we get
\be
d\mu_z/s^{2n-2+\alpha}=(2n-2)s^{-1-\alpha}\nu_z ds+s^{-\alpha}d\nu_z
\ee
and
\be
d\mu_z/s^{2n-2}=(2n-2)s^{-1}\nu_z ds+d\nu_z
\ee
We may assume that the right hand side in (4.2) is finite. This implies by (4.3) that
\be
\int_0^1 d\nu_z(s)/s^\alpha <\infty\,\,\text{and}\,\, \int_0^1 s^{-1-\alpha}\nu_z(s)ds<\infty.
\ee
From (4.4) we get
$$
U(z)\leq C+\int_0^{1} d\mu_z(s)/s^{2n-2}\leq C +(2n-2)\int_0^1 \nu_z(s) ds/s,
$$
and
integrating by parts we find
$$
\int_0^1\nu_z(s)ds/s= \liminf_{s\to 0} -\nu_z(s)\log s +\int_0^{1} \log(1/s)d\nu_z(s)= \int_0^{1} \log(1/s)d\nu_z(s)
$$
(the $\liminf$-part must vanish by the second part of (4.5)). 
Since the total integral of $d\nu_z$ is less than $\nu_z(1)$, and the total mass of $\mu_z$ is at most 1, the integral of  
$d\nu_z$ is bounded by 1. Hence it follows from Jensen's inequality that
$$
e^{\alpha U(z)/(2n-2)}\leq C \int_0^1 d\nu_z(s)/s^{\alpha},
$$
which by (4.3) is dominated by
$$
C\int_0^1 d\mu_z(s)/s^{2n-2+\alpha},
$$
which is smaller than the right hand side of (4.2). This completes the proof of the proposition.

\end{proof}
Note that it follows from the proposition that $e^{\alpha U(z)/(2n-2)}$ is  integrable over $B$ if $\nu_z(s)$ is increasing for all $z$ in $B$. We shall now couple this argument with Theorem 1.1 to get a similar conclusion for integrals over $M$ when $M$ is generating.

\begin{lma} Let $K$ be a compact subset of $M$ and let $d(\zeta)$ be the distance from $\zeta$ to $M$. Denote by $dM$ surface measure on $M$. Then
$$
I:=\int_K |z-\zeta|^{-(2n-2+\alpha)} dMu(z)\leq A_0 d^{2-m-\alpha}(\zeta) + A_1,
$$
for some constants $A_j$ if $\zeta$ is sufficiently close to $M$.
\end{lma}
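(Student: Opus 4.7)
The plan is a layer-cake argument combined with the standard volume bound for small balls intersected with $M$. Set $d=d(\zeta)$ and let $z_0\in M$ be a closest point to $\zeta$. Because $M$ is a smooth $(2n-m)$-dimensional submanifold, covering a neighbourhood of $K$ in $M$ by finitely many graph charts over tangent spaces yields constants $C_M$ and $r_0>0$ with
$$
dM(M\cap B(w,r))\leq C_M\, r^{2n-m}\qquad \text{for all } w\in K,\ 0<r\leq r_0.
$$
This Ahlfors-regularity estimate is the sole input from the geometry of $M$.

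Define $m(t):=dM(\{z\in K:|z-\zeta|\leq t\})$. Since the distance from $\zeta$ to $M$ is exactly $d$, we have $m(t)=0$ for $t<d$. For $d\leq t\leq r_0/2$, the triangle inequality gives $\{z\in M:|z-\zeta|\leq t\}\subset M\cap B(z_0,2t)$, hence $m(t)\leq C\,t^{2n-m}$; for larger $t$ one has trivially $m(t)\leq dM(K)$. Applying Fubini (or, equivalently, the identity $|z-\zeta|^{-(2n-2+\alpha)}=(2n-2+\alpha)\int_{|z-\zeta|}^{\infty}t^{-(2n-1+\alpha)}\,dt$) gives
$$
I=(2n-2+\alpha)\int_{d}^{\infty} m(t)\, t^{-(2n-1+\alpha)}\, dt.
$$
Splitting at $t=r_0/2$, the tail is dominated by a constant $A_1$, and the remaining piece reduces to
$$
C_1\int_{d}^{r_0/2} t^{1-m-\alpha}\, dt.
$$

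Computing the antiderivative of $t^{1-m-\alpha}$, this last integral is bounded by a constant when $m+\alpha<2$ and by a constant multiple of $d^{2-m-\alpha}$ when $m+\alpha>2$ (the borderline $m+\alpha=2$ is excluded by $m\geq 1$ an integer and $0<\alpha<1$, inherited from Proposition 4.2). In either regime the stated bound $A_0\,d^{2-m-\alpha}+A_1$ follows. There is no serious obstacle: the entire argument is metric/measure-theoretic, depending only on the volume estimate for $M$-balls, with the rest being a routine calculus computation. In particular, Theorem 1.1 is not needed here; it is what one combines with this lemma in the subsequent argument for Theorem 4.1.
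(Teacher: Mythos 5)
Your proof is correct and is essentially the paper's own argument: both hinge on the estimate $dM(\{|z-\zeta|<r\})\leq Cr^{2n-m}$ together with the vanishing of this quantity for $r<d(\zeta)$, and then reduce $I$ to $\int_d t^{1-m-\alpha}\,dt$, your layer-cake identity being just the Fubini form of the paper's Stieltjes integration by parts against $d\tau(r)$. The only (harmless) difference is that you spell out the chart-based volume bound and the case distinction around $m+\alpha=2$, which the paper leaves implicit.
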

\begin{proof} Let
$$
\tau(r)=\int_{|z-\zeta|<r} dM(z).
$$
Then $\tau(r)=0$ if $r<d(\zeta)$ and $\tau(r)\leq C r^{2n-m}$, if $r$ is smaller than some small constant, $a$. Hence
$$
I\leq A+\int_d^{a} d\tau(r)/r^{2n-2+\alpha}=A+A'\int_d^{a} \tau(r)d\tau/r^{2n-1+\alpha}\leq
$$
$$
A+CA'\int_d^{a} dr/r^{m+\alpha-1}=A_1+A_0 /d^{m-2+\alpha}.
$$
\end{proof}
Theorem 4.1 is now a simple consequence. A plurisubharmonic function $\phi$ can be decomposed into a harmonic part and a Newtonian potential, and for integrability question we only have to worry about the potential. After rescaling and multiplication by a constant, the potential has the form of the function $U$ in Proposition 4.2. When integrationg $e^{\alpha U/(2n-2)}$ over a compact subset of $M$ we are by Proposition 4.2 and Lemma 4.3 reduced to studying integrals of the form
$$
\int_{B} d\mu(\zeta)/d^{m-2+\alpha}.
$$
Here the measure $\mu$ is the Laplacian of $\phi$ which is also the trace measure, $\theta\wedge\beta^{n-1}/(n-1)!$ of the positive closed current $\theta=i\ddbar\phi$.
These integrals are finite if $\alpha<1$ by Theorem 1.1 and an integration by parts argument like the ones we have already used several times, and this completes the proof of Theorem 4.1.

\end{document}